\documentclass[a4paper,12pt,reqno]{amsart}
\usepackage[T1]{fontenc}
\usepackage[utf8]{inputenc}
\usepackage[libertinus,vvarbb]{newtx}
\usepackage[margin=1in]{geometry}
\usepackage{enumitem}
\usepackage{graphicx}
\usepackage[svgnames]{xcolor}
\usepackage{xparse}
\usepackage{xurl}
\usepackage[bookmarksdepth=2]{hyperref}

\hypersetup{colorlinks=true,urlcolor=MidnightBlue,linkcolor=MidnightBlue,citecolor=MidnightBlue}

\usepackage[normalem]{ulem}

\newcommand{\stkout}[1]{\ifmmode\text{\sout{\ensuremath{#1}}}\else\sout{#1}\fi}
\newcommand{\crsout}[1]{\ifmmode\text{\xout{\ensuremath{#1}}}\else\xout{#1}\fi}

\numberwithin{equation}{section}

\newtheorem{theorem}{Theorem}[section]
\newtheorem{proposition}[theorem]{Proposition}
\newtheorem{lemma}[theorem]{Lemma}

\theoremstyle{remark}
\newtheorem{remark}[theorem]{Remark}

\DeclareMathOperator{\re}{Re}

\newcommand{\ind}{\mathbb{1}}

\newcommand{\pr}{\mathbb{P}}
\newcommand{\ex}{\mathbb{E}}
\newcommand{\C}{\mathbb{C}}
\newcommand{\R}{\mathbb{R}}

\newcommand{\fourier}{\mathscr{F}}
\newcommand{\ph}{\varphi}

\mathchardef\mathhyphen="2D

\newcommand{\cm}{\mathscr{C}\mspace{-4mu}\mathscr{M}}
\newcommand{\am}{\mathscr{A}\mspace{-4mu}\mathscr{M}}
\newcommand{\amcm}{\am\mspace{-2mu}\mathhyphen\mspace{-1mu}\cm}

\renewcommand{\le}{\leqslant}
\renewcommand{\ge}{\geqslant}

\NewDocumentCommand{\formula}{ssom}{%
 \IfBooleanTF{#1}{%
  \IfBooleanTF{#2}{%
   \IfValueTF{#3}%
    {\begin{align}\label{#3}\begin{gathered}#4\end{gathered}\end{align}}%
    {\begin{gather}#4\end{gather}}%
  }{%
   \IfValueTF{#3}%
    {\begin{align}\label{#3}\begin{aligned}#4\end{aligned}\end{align}}%
    {\begin{gather*}#4\end{gather*}}%
  }%
 }{%
  \IfValueTF{#3}%
   {\begin{align}\label{#3}#4\end{align}}%
   {\begin{align*}#4\end{align*}}%
 }%
}

\newcommand{\doi}[1]{\href{https://doi.org/#1}{\textsf{\scriptsize DOI:#1}}}

\NewDocumentCommand{\link}{oom}{\href{#3}{\textsf{\scriptsize \IfValueTF{#1}{#1}{#3}}}\IfValueT{#2}{{\textsf{\scriptsize #2}}}}

\begin{document}

\title{Bell-shaped functions are Poisson kernels}
\author{Mateusz Kwaśnicki}
\thanks{Work supported by the National Science Centre, Poland, grant no.\@ 2023/49/B/ST1/04303}
\address{Mateusz Kwaśnicki \\ Department of Analysis and Stochastic Processes \\ Wrocław University of Science and Technology \\ Wybrzeże Wyspiańskiego 27 \\ 50-370 Wrocław, Poland}
\email{\href{mailto:mateusz.kwasnicki@pwr.edu.pl}{\textsf{mateusz.kwasnicki@pwr.edu.pl}}}
\keywords{Elliptic operator, harmonic function, Poisson kernel, extension technique, bell-shape}
\subjclass[2020]{%
 26A51, 
 35J25, 
 35J70, 
 60J60
}

\begin{abstract}
In a recent paper [Kwaśnicki, \textit{Poisson kernels on the half-plane are bell-shaped,} Bull.\@ London Math.\@ Soc.\@ 58 (2026), no.~e70303], the author proved that Poisson kernels for a class of elliptic operators in the half-plane are bell-shaped. By allowing one-sided barriers, we prove a converse result.
\end{abstract}

\maketitle

%
%

\section{Introduction and statement of the result}

Theorem~1.1 in~\cite{k26} states that Poisson kernels for second-order elliptic operators in the half-plane, with coefficients depending only on the vertical variable, are weakly bell-shaped. The converse question was discussed in Remark~1.2 therein. The purpose of this note is to explain how allowing one-sided barriers in the vertical variable leads to an answer to this question.

Recall that a nonnegative smooth function $f$ on $\R$ is \emph{bell-shaped} if it converges to zero at $\pm\infty$ and its $n$th derivative changes sign exactly $n$ times for $n = 0, 1, 2, \ldots$\, A finite nonnegative measure is \emph{weakly bell-shaped} if all its convolutions with the Gauss--Weierstrass kernel are bell-shaped. A smooth weakly bell-shaped function is automatically bell-shaped.

The probabilistic formulation of the main result of~\cite{k26} involves planar \emph{shift-invariant diffusions}, studied in~\cite{k23}. These are continuous strong Markov processes $(X_t, Y_t)$ in $\R \times [0, R)$, where $R \in (0, \infty]$, whose law is invariant under horizontal translations. In~\cite{k23, k26}, $Y_t$ is assumed to be transitive on $[0, R)$. (By mistake, this condition is not included in Definition~1.1 in~\cite{k23}, but it is clearly assumed in the proofs, and discussed in detail in Section~3 therein.) Here we allow $Y_t$ to be intransitive, and in particular we introduce one-sided barriers that $Y_t$ can only cross in one direction.

Let $T_0$ be the hitting time of $\R \times \{0\}$. Under the transitivity assumption, Corollary~1.3 in~\cite{k26} states that the distribution of $X(T_0)$ is weakly bell-shaped. By dropping the transitivity condition, we prove the following converse statement.

\begin{theorem}
\label{thm:main}
Every weakly bell-shaped subprobability measure on $\R$ is the hitting distribution for a possibly intransitive shift-invariant diffusion in $\R \times [0, \infty)$.
\end{theorem}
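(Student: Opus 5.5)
The plan is to combine the known structure theorem for weakly bell-shaped measures with a layered construction in the vertical variable. By the characterisation of Kwaśnicki and Simon, a weakly bell-shaped finite measure $\mu$ is the convolution $\mu = \pi * \nu$ of a Pólya frequency distribution $\pi$ and an $\amcm$ measure $\nu$ (up to a multiplicative constant $c \in [0,1]$ for subprobability measures). One-sided barriers let us stack independent ``layers'' of a shift-invariant diffusion. If the process first moves in an upper region, then crosses a barrier into a lower region it can never leave upwards, the horizontal displacements acquired in the two regions are independent by the strong Markov property and shift invariance. Hence the hitting distribution of $\R\times\{0\}$ is the convolution of the layer contributions. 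It therefore suffices to realise each factor separately and glue the layers together.

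For the $\amcm$ factor $\nu$, I would use a transitive shift-invariant diffusion on a strip $\R\times[h,\infty)$, with the bottom level $h$ playing the role of $0$. I expect the results of~\cite{k23} (the description of Poisson kernels of transitive shift-invariant diffusions via their Fourier symbols, i.e.\ complete Bernstein-type exponents) to show that every $\amcm$ distribution arises as such a hitting distribution, after a vertical shift. For the Pólya frequency factor, write its Fourier transform as
\[
\hat\pi(\xi)^{-1} = e^{\delta\xi^2 - i b\xi}\prod_k (1 + i a_k\xi)e^{-i a_k\xi}, \qquad \sum_k a_k^2<\infty,
\]
and realise each elementary factor by a thin layer. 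The Gaussian-with-drift factor comes from a layer in which $Y$ moves down deterministically at unit speed while $X$ is a Brownian motion with drift. A factor that is a finite convolution of exponentials, or the infinite product $1/\cosh(h\sqrt{2s})$ evaluated appropriately, comes from a layer in which $Y$ is a Brownian motion reflected at the top barrier and run until it hits the bottom, while $X$ moves with constant velocity; the law of $X$ on exit is then a convolution of exponentials with explicitly computable scales, and a suitable choice of speed measure in the layer produces a single exponential, or any prescribed finite collection. Each compensating drift $e^{-ia_k\xi}$ is absorbed into the deterministic drift part. Layers are separated by one-sided barriers, i.e.\ levels that $Y$ can cross only downwards. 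The heights of the layers $h_k$ are chosen summable, so that countably many layers fit into a bounded interval $[0,H]$, and the $\amcm$ region sits on top, in $[H,\infty)$. The constant $c<1$ is obtained by killing the process at the start, or at a barrier, with probability $1-c$.

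The main obstacle is the infinite-product step and the verification that the glued object is a genuine continuous strong Markov process. One has to check that, with infinitely many layers accumulating at $0$, the process reaches $\R\times\{0\}$ in finite time almost surely, at least on the event of non-killing. One must also check that the horizontal displacements converge, which uses $\sum a_k^2<\infty$ together with the drift compensations, giving convergence in $L^2$ of the centred sum. Finally, the resulting hitting law must equal $\pi * \nu$ exactly, which I would verify on the Fourier side by multiplying the layer characteristic functions and passing to the limit.
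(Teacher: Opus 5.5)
Your architecture matches the paper's: the Kwaśnicki--Simon factorisation $f = g * h$, an $\amcm$ layer on top, and below it a Gaussian layer and countably many exponential layers of summable heights, separated by downward-only interfaces. The convolution comes from independence of the successive displacements.

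\textbf{The $\amcm$ step.} This is the genuine gap, and you only \emph{expect} it to follow from~\cite{k23}. That paper identifies the characteristic exponents of \emph{boundary traces}; these are the Rogers functions, and complete Bernstein functions of $\xi^2$ only cover the symmetric case. It says nothing directly about hitting distributions. A bridge is needed, and it is the actual content of Section~\ref{sec:amcm}:
\begin{enumerate}
\item By Bernstein's theorem, $\fourier g(-\xi) = p + \int_{(0,\infty)} \frac{s}{s - i\xi}\,\mu_+(ds) + \int_{(0,\infty)} \frac{s}{s + i\xi}\,\mu_-(ds)$. Its holomorphic extension $G$ to $\C \setminus i\R$ satisfies $\re(\xi G(\xi)) > 0$ for $\re \xi > 0$, so $1/G$ is a Rogers function. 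Since $g(\R) \le 1$, its constant term is at least $1$. Hence $\hat\psi(\xi) = \tfrac12(1/\fourier g(-\xi) - 1)$ is a Rogers function (Lemma~\ref{lem:reciprocal}), and~\cite{k23} gives a regular transitive diffusion whose boundary trace has exponent $\hat\psi$.
\item Place this diffusion on top of a trivial layer $\R \times [0,1]$ in which $X$ is constant. There $\ph_\xi$ is affine, and matching values and derivatives at $y = 1$ gives $\ph_\xi(1) = 1/(1 + 2\hat\psi(\xi)) = \fourier g(-\xi)$. So $g$ is the hitting law from height $1$.
\end{enumerate}
This also absorbs the mass defect $g(\R) < 1$, so no separate killing is needed. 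Killing ``at the start'' with probability $1-c$ would in any case be incompatible with a normal Markov process started at a point; killing at a one-way barrier is fine.

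\textbf{The exponential layer.} It is also not right as stated. Suppose $Y$ is a continuous diffusion in the layer and $X$ moves with constant velocity $v$. Then $X(T_0) = vT_0$, and $T_0$ is a sum of independent exponentials indexed by the eigenvalues of the associated string. Continuity of $Y$ forces the speed measure to have full support, hence infinitely many eigenvalues. So no admissible speed measure gives a single exponential or a finite collection; finitely many atoms would make $Y$ jump across the gaps.

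Moreover, $vT_0$ is not centred. Since only $\sum a_k^2 < \infty$ is available, the shifts $-a_k$ cannot be absorbed into the Gaussian layer's drift; each layer must be centred on its own.

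Both points can be repaired, for example by driving $X$ with the local time of $Y$ at the top barrier and adding a thin deterministic shift. The paper's layer does both jobs at once. With $s = r^2$ and $b(y) = r^{-1}\ind_{[0,s)}(y)$, after erasing the excursions above $s$ one has $X_t = r^{-1}B_t$ and $Y_t = s + B_t - K_t$. Then $X(T_0) = r^{-1}(K_{T_0} - s)$ is exactly a centred exponential with parameter $r$. The bounds $\ex^s[T_0] = r^4$ and $\ex^s[\sup_{t \le T_0} \lvert X_t\rvert^2] \le 4r^2$ are precisely what give $\tau_\infty < \infty$ and continuity of the path at the accumulation time. Your version would need analogous estimates.
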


By hitting distribution we mean the law of $X(T_0)$. Note that we allow $(X_t, Y_t)$ to be killed at a finite lifetime, and in this case the law of $X(T_0)$ is a subprobability measure: the defective distribution of this random variable on the event $\{T_0 < \infty\}$.

\begin{remark}
In this work we focus on the probabilistic construction. Let us, however, briefly comment on the analytic counterpart.

The generators of transitive shift-invariant diffusions $(X_t, Y_t)$, studied in~\cite{k23, k26}, have the formal expression
\formula{
 L & = \tfrac{1}{2} a(y) \partial_{xx} + b(y) \partial_{xy} + \tfrac{1}{2} c(y) \partial_{yy} + d(y) \partial_x + e(y) \partial_y ,
}
with nonnegative second-order part. Here the coefficients can be measures or even Schwartz distributions, but a detailed treatment of regularity issues is complicated and it is omitted in~\cite{k23}. Instead, that work focuses on \emph{regular} transitive shift-invariant diffusions, which additionally require that $X_t$ is a local martingale, and $Y_t$ is the Brownian motion in $[0, R)$, reflected at $0$ and killed at $R$ (if $R < \infty$). In this case the generator is given by
\formula{
 L & = \tfrac{1}{2} (a(dy) + (b(y))^2 dy) \partial_{xx} + b(y) \partial_{xy} + \tfrac{1}{2} \partial_{yy} ,
}
where $a(dy)$ is a nonnegative locally finite measure on $[0, R)$ and $b(y)$ is a locally square-integrable function on $[0, R)$. For a rigorous discussion of the definition of $L$, we refer to~\cite{k23, k26}.

Generators of intransitive shift-invariant diffusions are given by the same formal expressions, but their domains can be different. A two-way interface between reflecting layers, as in Section~\ref{sec:amcm}, corresponds to matching of values and normal derivatives. A one-way interface above a reflected layer, as in the exponential layer in Section~\ref{sec:pff}, corresponds to a one-sided Neumann condition at its upper boundary, together with matching of values from the layer above. An interface above a layer with a deterministic downward motion, as in the Gaussian layer also in Section~\ref{sec:pff}, corresponds to matching the values only; no Neumann condition is imposed there.

The hitting distribution is the Poisson kernel for the corresponding generator $L$. Thus, Theorem~\ref{thm:main} states that every weakly bell-shaped probability measure is a Poisson kernel for an extended class of operators $L$.
\end{remark}

\begin{remark}
Theorem~\ref{thm:main} and the results of~\cite{k26} strongly suggest the following characterisation: $f$ is a weakly bell-shaped distribution if and only if it is the (nontrivial) hitting distribution of an intransitive shift-invariant diffusion in $\R \times [0, \infty)$. Theorem~\ref{thm:main} proves the direct implication, while Theorem~1.1 in~\cite{k26} gives the converse for transitive shift-invariant diffusions. The complete converse result can be approached as follows.

It is intuitively clear that every intransitive diffusion can be obtained by stacking a transitive layer atop a family of intransitive layers with one-way interfaces, in a way similar to the construction carried out in Section~\ref{sec:pff}, but possibly much more complicated. The results of~\cite{k26} imply that each such layer, crossed from its top boundary to the bottom one, contributes a Pólya frequency function to the overall hitting distribution. Finally, the convolution with a Pólya frequency function preserves the class of bell-shaped distributions, and this class is also closed by (nontrivial) vague limits.

We do not pursue this direction here: a rigorous version of the above argument would take us too far from the main scope of this note.
\end{remark}

Our proof follows the factorisation of weakly bell-shaped functions into a Pólya frequency factor and an absolutely monotone-then-completely monotone factor, obtained in~\cite{ks22}. The latter factor is realised above the starting level, using the harmonic extension technique of~\cite{k22,k23}. The former is obtained by inserting a sequence of exponential layers below the starting level. One-sided barriers ensure that the successive horizontal displacements are independent, so that their distributions are convolved.

The Fourier transform of a finite measure $f$ is denoted by
\formula{
 \fourier f(\xi) & = \int_\R e^{-i \xi x} f(dx) .
}
The hat symbol has a different usage in this paper, and so we do not abbreviate $\fourier f$ as $\hat f$. We denote by $\pr^{x, y}$ and $\ex^{x, y}$ the probability and expectation for the process $(X_t, Y_t)$ started at $(X_0, Y_0) = (x, y)$, and we simply write $\pr^y$ and $\ex^y$ when $x = 0$.

%
%

\section{The spectral problem}

Suppose that $(X_t, Y_t)$ is a regular transitive shift-invariant diffusion. Proposition~5.1 in~\cite{k26} identifies the characteristic function of $X(T_0)$ with the solution of the ODE
\formula[eq:ode]{
 \tfrac{1}{2} \ph_\xi''(dy) & = \tfrac{1}{2} \xi^2 \ph_\xi(y) a(dy) + \tfrac{1}{2} \xi^2 (b(y))^2 \ph_\xi(y) dy - i \xi b(y) \ph_\xi'(y) dy .
}
Namely, we have
\formula[eq:poisson]{
 \ex^y \bigl[ e^{i \xi X(T_0)} \ind_{\{T_0 < \infty\}} \bigr] & = \ph_\xi(y) .
}
We choose the solution $\ph_\xi$ which is a bounded function satisfying $\ph_\xi(0) = 1$, and if $R$ is finite, then we additionally require $\ph_\xi(R^-) = 0$. We refer to Theorem~3.1 in~\cite{k26} for a detailed statement and further discussion.

By the results of~\cite{k23}, the boundary trace of $(X_t, Y_t)$ is a Lévy process on $\R \times \{0\}$, which we identify with $\R$, with characteristic exponent
\formula{
 \psi(\xi) & = -\tfrac{1}{2} \ph_\xi'(0^+) + \tfrac{1}{2} a(\{0\}) \xi^2 .
}
Note that~\cite{k23} uses a slightly different normalisation, see Theorem~3.1 in~\cite{k26} for the above form. We will not need the above result here, but the above function $\psi$ appears in some expressions below. 

%
%

\section{Absolutely monotone-then-completely monotone layer}
\label{sec:amcm}

A measure $g$ is \emph{absolutely monotone-then-completely monotone}, or $\amcm$ in short, if it has an absolutely monotone density on $(-\infty, 0)$ and a completely monotone density on $(0, \infty)$, with an additional nonnegative atom at $0$ allowed. A continuous function $\psi : \R \to \C$ is a \emph{Rogers function} if $\psi(-\xi) = \overline{\psi(\xi)}$, and $\psi$ extends to a holomorphic function on the right complex half-plane $\re \xi > 0$, satisfying $\re(\psi(\xi) / \xi) \ge 0$ there. We refer to~\cite{k19, k20, k26} for further discussion.

We will need the following converse of Proposition~2.4 in~\cite{k26}.

\begin{lemma}
\label{lem:reciprocal}
If $g$ is a nonzero $\amcm$ subprobability measure, then
\formula{
 \hat \psi(\xi) & = \frac{1}{2} \biggl( \frac{1}{\fourier g(-\xi)} - 1 \biggr)
}
is a Rogers function.
\end{lemma}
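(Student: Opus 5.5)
The plan is to work directly from the Stieltjes-type representation of $\amcm$ measures and reduce the Rogers property to a positivity statement for a quotient of two explicit integrals. Since $g$ has an absolutely monotone density on $(-\infty,0)$ and a completely monotone density on $(0,\infty)$, Bernstein's theorem gives nonnegative measures $\mu_+$, $\mu_-$ on $(0,\infty)$ and an atom $c \ge 0$ such that
\formula{
 g(dx) & = c \delta_0(dx) + \biggl( \ind_{(0,\infty)}(x) \int_{(0,\infty)} e^{-s x} \mu_+(ds) + \ind_{(-\infty,0)}(x) \int_{(0,\infty)} e^{s x} \mu_-(ds) \biggr) dx .
}
Finiteness of $g$ is equivalent to $\int s^{-1} (\mu_+ + \mu_-)(ds) < \infty$. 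Writing $m(\xi) = \fourier g(-\xi)$, Fubini gives
\formula{
 m(\xi) & = c + \int \frac{\mu_+(ds)}{s - i \xi} + \int \frac{\mu_-(ds)}{s + i \xi} .
}
This formula defines a holomorphic function on $\re \xi > 0$, continuous up to the imaginary axis away from its singular set and continuous on $\R$. The symmetry $m(-\xi) = \overline{m(\xi)}$ for real $\xi$ is immediate, hence so is $\hat\psi(-\xi) = \overline{\hat\psi(\xi)}$.

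The next step is to show that $m$ has no zeros in the closed right half-plane, so that $\hat\psi$ is holomorphic there and continuous on $\R$. For this I will use the computation
\formula{
 \re \frac{\xi}{s - i\xi} & = \frac{s \re \xi}{|s - i \xi|^2} \ge 0 , \qquad \re \frac{\xi}{s + i\xi} = \frac{s \re \xi}{|s + i \xi|^2} \ge 0 .
}
It shows that $\re(\xi m(\xi)) > 0$ when $\re \xi > 0$ and $g \ne 0$. On the real line $m(\xi) \ne 0$ needs a separate check: the imaginary parts of the two integrals carry explicit signs, and $\re m(\xi) > 0$ for $\xi \in \R$.

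The core is the inequality $\re(\hat\psi(\xi)/\xi) \ge 0$, that is,
\formula{
 \re \frac{1 - m(\xi)}{\xi \, m(\xi)} & \ge 0 , \qquad \re \xi > 0 .
}
With $\delta = 1 - g(\R) \ge 0$ and the identity $g(\R) = c + \int s^{-1} (\mu_+ + \mu_-)(ds)$, I will write
\formula{
 \frac{1 - m(\xi)}{\xi} & = \frac{\delta}{\xi} - i \int \frac{\mu_+(ds)}{s (s - i \xi)} + i \int \frac{\mu_-(ds)}{s (s + i \xi)} ,
}
which again has nonnegative real part. So both $A = (1-m)/\xi$ and $B = \xi m$ lie in the closed right half-plane, and the goal is $\re(\xi A / B) \ge 0$.

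This is the main obstacle, because a product or quotient of right-half-plane numbers need not stay there; one has to control arguments rather than real parts. My first attempt would be to restrict by symmetry to the quadrant $\re \xi > 0$, $\im \xi \ge 0$. There I would compare, term by term in $s$, the arguments of the integrands of $A$ and $B$ against $\arg \xi$, aiming to show that $\arg A - \arg B + \arg \xi \in [-\pi/2, \pi/2]$. The $\mu_+$ contributions behave well in this quadrant, while the $\mu_-$ contributions do not obviously do so.

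If this direct estimate fails, the fallback is to invoke the known characterisation of Rogers functions from~\cite{k19}: $\psi$ is a Rogers function if and only if $1/(\lambda + \psi)$ corresponds to an $\amcm$ measure for $\lambda > 0$. Equivalently, one can use the exponential (Stieltjes-type) representation of Rogers functions. I would show that $\log m$ admits the representation of $\log$ of the reciprocal of a Rogers function, shifted by $1$. Then $1/m = 1 + 2\hat\psi$ is of the form $1 + 2\hat\psi$ with $\hat\psi$ Rogers, and the subprobability condition $m(0) \le 1$ ensures that subtracting $1$ keeps $\re(\hat\psi(\xi)/\xi) \ge 0$.
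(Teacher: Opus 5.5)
There is a genuine gap, at exactly the step you call the core. The representation of $m$ and the bound $\re(\xi m(\xi))>0$ are the right ingredients, and they are what the paper uses. But neither of your two routes proves $\re(\hat\psi(\xi)/\xi)\ge0$.

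\textbf{The argument comparison gives nothing.} Take $\arg\xi=\theta\in(0,\pi/2)$. The $\mu_+$ and $\mu_-$ integrands of $B=\xi m$ have arguments in $(\theta,\pi/2)$ and $(-\pi/2,\theta)$. Those of $A$ have arguments in $(-\pi/2,-\theta)$ and $(-\theta,\pi/2)$. So each of $A$, $B$ can a priori point anywhere in the right half-plane, and nothing follows about $\arg A-\arg B+\theta$.

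\textbf{The fallback does not close the gap either.} A characterisation through $1/(\lambda+\psi)$ for all $\lambda>0$ does not apply: you only know the $\amcm$ property for the single value encoded in $1/m=1+2\hat\psi$. Your claim that ``the subprobability condition ensures that subtracting $1$ keeps $\re(\hat\psi(\xi)/\xi)\ge0$'' is precisely the nontrivial content of the lemma, and you state it without argument. The condition matters: for $g$ twice the standard exponential distribution, $\hat\psi(\xi)=-\tfrac14-\tfrac{i\xi}{4}$, which is not a Rogers function.

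\textbf{The missing idea} is the splitting $\frac{1-m}{\xi m}=\frac{1}{\xi m}-\frac{1}{\xi}$. Since you already have $\re(\xi m)>0$, the first term has positive real part. So $\psi=1/m$ is itself a Rogers function, and the ``quotient of right-half-plane numbers'' obstacle does not arise. Only the subtraction of the constant $1$ remains.

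The paper handles this with the integral representation of Rogers functions from~\cite{k19}. Its constant term equals $\psi(0^+)=1/g(\R)\ge1$, hence $\psi-1$ is again a Rogers function.

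If you prefer an elementary argument, here is one. The required inequality is equivalent to $\lvert m(\xi)\rvert^2\re\xi\le\re(\xi m(\xi))$. Write $m=\int F\,d\nu$, where:
\begin{itemize}
\item $\nu=c\,\delta_0+s^{-1}\mu_+(ds)+s^{-1}\mu_-(ds)$, placed on $\{0\}$ and two disjoint copies of $(0,\infty)$;
\item $F=1$ at the atom and $F=s/(s\mp i\xi)$ on the two copies.
\end{itemize}
The total mass of $\nu$ is $g(\R)\le1$. Cauchy--Schwarz then gives $\lvert m\rvert^2\le\int\lvert F\rvert^2\,d\nu=c+\int\frac{s\,\mu_+(ds)}{\lvert s-i\xi\rvert^2}+\int\frac{s\,\mu_-(ds)}{\lvert s+i\xi\rvert^2}$. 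By your own computation of $\re\frac{\xi}{s\mp i\xi}$, the right-hand side is exactly $\re(\xi m)/\re\xi$.
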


\begin{proof}
By Bernstein's theorem (see Corollary~3.3 in~\cite{k20}), for real $\xi$ we have
\formula[eq:mixture]{
 \fourier g(-\xi) & = p + \int_{(0, \infty)} \frac{s}{s - i \xi} \, \mu_+(ds) + \int_{(0, \infty)} \frac{s}{s + i \xi} \, \mu_-(ds) ,
}
where $p = g(\{0\}) \ge 0$ and $\mu_+$ and $\mu_-$ are finite nonnegative measures. The right-hand side of~\eqref{eq:mixture} extends to a holomorphic function $G(\xi)$ on $\C \setminus i \R$. If $\re \xi > 0$, we have
\formula{
 \re \frac{s \xi}{s \pm i \xi} & = \frac{s^2 \re \xi}{|s \pm i \xi|^2} > 0 .
}
Consequently, $\re(\xi G(\xi)) > 0$. In particular, $G$ has no zeroes in the right half-plane, and
\formula{
 \re\frac{1}{\xi G(\xi)} & > 0 .
}
Thus, $\psi(\xi) = 1 / G(\xi)$ is a Rogers function. By continuity, $G(0^+) = \fourier g(0^+) = g(\R) \le 1$, and hence $\psi(0^+) \ge 1$, that is, the constant term in the integral representation of $\psi$ is at least $1$ (see Theorem~3.3 and Remark~3.4(f) in~\cite{k19}). We conclude that $\hat \psi(\xi) = \tfrac{1}{2} (\psi(\xi) - 1)$ is a Rogers function.
\end{proof}

By~\cite{k23} (see also Theorem~3.1 in~\cite{k26}), $\hat \psi$ is the characteristic exponent of the boundary trace of an appropriate regular transitive shift-invariant diffusion $(\hat X_t, \hat Y_t)$ in $\R \times [0, \hat R)$. We denote the corresponding coefficients by $\hat a(dy)$ and $\hat b(y)$.

We now apply the construction fully analogous to the one given in Sections~3.3 and~3.4 in~\cite{k26}: we stack the layer $(\hat X_t, \hat Y_t)$ in $\R \times [0, \hat R)$ atop a trivial layer in $\R \times [0, 1]$, where the horizontal component remains constant. The interface between these layers is two sided, unless the coefficient $\hat a(dy)$ has an atom at $0$.

A formal construction is as follows. We put $R = 1 + \hat R$, and we define the coefficients $a(dy)$ and $b(y)$ of $(X_t, Y_t)$ by
\formula{
 a(dy) & = \hat a(dy - 1) && \text{on } [1, R), & a(dy) & = 0 & & \text{on } [0, 1) , \\
 b(y) & = \hat b(y - 1) && \text{on } [1, R), & b(y) & = 0 && \text{on } [0, 1) .
}
The solution $\ph_\xi(y)$ of the corresponding ODE~\eqref{eq:ode} is affine in $[0, 1]$, proportional to $\hat \ph_\xi(y - 1)$ in $[1, R)$, and the matching condition at $y = 1$ is
\formula{
 \tfrac{1}{2} (\ph_\xi'(1^+) - \ph_\xi'(1^-)) & = \tfrac{1}{2} \ph_\xi''(\{1\}) = \tfrac{1}{2} \ph_\xi(1) a(\{1\}) \xi^2 .
}
Since $\ph_\xi$ is affine on $[0, 1]$, we have $-\tfrac{1}{2} \ph_\xi'(1^-) = \tfrac{1}{2} (1 - \ph_\xi(1))$. On the other hand, $\ph_\xi(y) = \ph_\xi(1) \hat \ph_\xi(y - 1)$ for $y \in [1, R)$, and so
\formula{
 -\tfrac{1}{2} \ph_\xi'(1^+) + \tfrac{1}{2} \ph_\xi(1) a(\{1\}) \xi^2 & = -\tfrac{1}{2} \ph_\xi(1) \hat \ph_\xi'(0^+) + \tfrac{1}{2} \ph_\xi(1) \hat a(\{0\}) \xi^2 = \ph_\xi(1) \hat \psi(\xi) .
}
It follows that $\tfrac{1}{2} (1 - \ph_\xi(1)) = \ph_\xi(1) \hat \psi(\xi)$, that is,
\formula[eq:hat:check]{
 \ph_\xi(1) & = \frac{1}{1 + 2 \hat \psi(\xi)} \, .
}
Combining this with~\eqref{eq:poisson} and Lemma~\ref{lem:reciprocal}, we conclude that
\formula{
 \ex^1 \bigl[ e^{i \xi X(T_0)} \ind_{\{T_0 < \infty\}} \bigr] & = \ph_\xi(1) = \frac{1}{1 + 2 \hat \psi(\xi)} = \fourier g(-\xi) ,
}
that is, $X(T_0)$ has distribution $g$. This proves the following result.

\begin{proposition}
\label{prop:amcm}
Every nonzero $\amcm$ subprobability measure is the hitting distribution of a regular transitive shift-invariant diffusion in $\R \times [0, R)$, where $R \in (0, \infty]$.
\end{proposition}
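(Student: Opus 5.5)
The proof of Proposition~\ref{prop:amcm} is essentially assembled in the preceding construction; the plan is to organise it and check the details. Let $g$ be a nonzero $\amcm$ subprobability measure. By Lemma~\ref{lem:reciprocal}, $\hat\psi(\xi) = \tfrac12(1/\fourier g(-\xi) - 1)$ is a Rogers function. By~\cite{k23} (in the form of Theorem~3.1 in~\cite{k26}), there is a regular transitive shift-invariant diffusion $(\hat X_t, \hat Y_t)$ in $\R \times [0, \hat R)$ whose boundary trace has characteristic exponent $\hat\psi$. Its coefficients are $\hat a(dy)$ and $\hat b(y)$, and its bounded solution $\hat\ph_\xi$ of~\eqref{eq:ode} satisfies $\hat\ph_\xi(0)=1$ and $\hat\psi(\xi) = -\tfrac12\hat\ph_\xi'(0^+) + \tfrac12\hat a(\{0\})\xi^2$.

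Next, define $R = 1+\hat R$ and the shifted coefficients $a$ and $b$ as above, extended by zero on $[0,1)$. The first check is that these are admissible data: $a$ is a nonnegative locally finite measure on $[0,R)$ and $b$ is locally square-integrable on $[0,R)$. This is immediate from the corresponding properties of $\hat a$ and $\hat b$. Hence they define a regular transitive shift-invariant diffusion $(X_t,Y_t)$ in $\R\times[0,R)$, with $Y_t$ a reflected Brownian motion, which is transitive.

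Then identify the solution $\ph_\xi$ of~\eqref{eq:ode} with $\ph_\xi(0)=1$, bounded, and with $\ph_\xi(R^-)=0$ when $R<\infty$. On $[0,1)$ the equation reads $\ph_\xi''=0$, so $\ph_\xi$ is affine there. On $(1,R)$, the function $y\mapsto \hat\ph_\xi(y-1)$ solves the shifted equation with the correct boundary behaviour at $R$, and the solution space with that behaviour is one-dimensional. Therefore $\ph_\xi(y)=\ph_\xi(1)\hat\ph_\xi(y-1)$ there, by continuity at $y=1$. Integrating~\eqref{eq:ode} over $\{1\}$ gives the jump condition. The computation above then yields~\eqref{eq:hat:check}, namely $\ph_\xi(1)=1/(1+2\hat\psi(\xi))=\fourier g(-\xi)$.

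By~\eqref{eq:poisson}, started from $(0,1)$ the variable $X(T_0)$ on $\{T_0<\infty\}$ has characteristic function $\fourier g(-\xi)$, so its defective law is $g$ by uniqueness of Fourier transforms of finite measures.

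The main obstacle is the justification of the ODE step: uniqueness of $\ph_\xi$, and the claim that the bounded solution on the upper part is exactly the shifted $\hat\ph_\xi$, including when $\hat R<\infty$ or $\hat a$ has an atom at $0$. I would handle this by appealing directly to Theorem~3.1 and Proposition~5.1 in~\cite{k26}, which apply to arbitrary admissible coefficients, as in Sections~3.3--3.4 there. The one point to verify by hand is that $1+2\hat\psi(\xi)\neq0$. This holds because $\re\hat\psi\ge0$ on $\R$, as for any Rogers function.
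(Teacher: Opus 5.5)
Your proposal is correct and follows the paper's argument: you use Lemma~\ref{lem:reciprocal} and Theorem~3.1 of~\cite{k26} to produce the layer $(\hat X_t,\hat Y_t)$, stack it atop a trivial layer of height $1$, and use the affine-plus-jump matching at $y=1$ to get $\ph_\xi(1)=1/(1+2\hat\psi(\xi))=\fourier g(-\xi)$. Your extra checks are sound refinements of steps the paper leaves implicit: admissibility of the shifted coefficients, one-dimensionality of the bounded solutions on $(1,R)$, and $\re(1+2\hat\psi)\ge 1$.
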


We call the diffusion constructed above the \emph{$\amcm$ layer.} Note that reflection at $\R \times \{0\}$ is irrelevant for the distribution of $X(T_0)$, studied in~\cite{k26}. It only played a role in the definition of the boundary trace in~\cite{k23}.

\begin{remark}
Equation~\eqref{eq:hat:check} also follows directly from the construction carried out in Section~3.3 of~\cite{k26}, at least when $\hat a(\{0\}) = 0$. With $\check R = 1$, $\check a(dy) = 0 dy$ and $\check b(y) = 0$, we then have $\check \psi(\xi) = \tfrac{1}{2}$, and Eq.~(3.7) in~\cite{k26} gives~\eqref{eq:hat:check}. The case $\hat a(\{0\}) > 0$ requires a minor tweak: one has to move this atom from $\hat a$ to $\check a$.
\end{remark}

%
%

\section{Pólya frequency function layer}
\label{sec:pff}

A probability measure $h$ on $\R$ is a \emph{Pólya frequency function} if it is the convolution of a (possibly degenerate) Gaussian distribution and centred exponential distributions (or their reflections). The number of centred exponential factors may be infinite, but in this case the series of corresponding variances necessarily converges. Thus, the characteristic function of $h$ is
\formula[eq:pff]{
 \fourier h(-\xi) & = e^{i p \xi - q \xi^2 / 2} \prod_n \frac{e^{-i r_n \xi}}{1 - i r_n \xi} ,
}
where $p \in \R$ and $q \in [0, \infty)$ are the mean and the variance of the Gaussian component, while $r_n \in \R \setminus \{0\}$ are the parameters of the centred exponential factors (with $r_n < 0$ corresponding to the reflected exponential distribution). The product in~\eqref{eq:pff} may be finite or infinite. We refer to~\cite{k20} for further discussion.

\subsection{Gaussian layer}

Fix $p \in \R$ and $q \in (0, \infty)$, and consider the intransitive shift-invariant diffusion $(X_t, Y_t)$ in $\R \times [0, q]$ such that $X_t$ is the standard Brownian motion and $Y_t = \max\{0, Y_0 - t\}$. Clearly, $T_0 = Y_0$, and so
\formula{
 \ex^{p, q} \bigl[ e^{i \xi X(T_0)} \bigr] & = \ex^{p, q} \bigl[ e^{i \xi X(q)} \bigr] = e^{i p \xi - q \xi^2 / 2} .
}
Thus, the Gaussian distribution with mean $p$ and variance $q$ is the Poisson kernel of an intransitive shift-invariant diffusion in $\R \times [0, q]$, called the \emph{Gaussian layer.} Note that the hitting time $T_0$ is deterministic: under $\pr^{p, q}$ we have $T_0 = q$.

\subsection{Exponential layer}

Fix $r \in \R \setminus \{0\}$, set $s = r^2$, and let $(X_t, Y_t)$ be the transitive regular shift-invariant diffusion in $\R \times [0, \infty)$ corresponding to coefficients $a(dy) = 0 dy$ and $b(y) = r^{-1} \ind_{[0, s)}(y)$. As usual, $Y_t$ is the reflected Brownian motion in $[0, \infty)$. Informally speaking, $X_t$ evolves as $r^{-1} Y_t$ when $Y_t < s$ and stays constant when $Y_t > s$.

The solution of the corresponding ODE is
\formula{
 \ph_\xi(y) & = e^{-i r^{-1} \xi y} \biggl( 1 + \frac{i r^{-1} \xi y}{1 - i r \xi} \biggr)
}
for $y \in [0, s]$, and $\ph_\xi(y) = \ph_\xi(s)$ for $y \in (s, \infty)$. In particular,
\formula{
 \ex^s \bigl[ e^{i \xi X(T_0)} \bigr] & = \ph_\xi(s) = \frac{e^{-i r \xi}}{1 - i r \xi} ,
}
that is, under $\pr^s$, $X(T_0)$ has a centred exponential distribution with parameter $r$ (reflected if $r < 0$).

Note that $X_t$ remains constant on every time interval where $Y_t > s$. Hence, we can erase the fragments of paths of $(X_t, Y_t)$ where $Y_t > s$ without affecting the distribution of $X(T_0)$. Doing so, we obtain an irregular shift-invariant diffusion in $\R \times [0, s]$, reflected both at $\R \times \{0\}$ and at $\R \times \{s\}$. Thus, the centred exponential or reflected exponential distribution with parameter $\lvert r \rvert$ is the Poisson kernel of an irregular shift-invariant diffusion in $\R \times [0, r^2]$, reflected at $\R \times \{0\}$ and at $\R \times \{r^2\}$; the former one, however, is irrelevant for the distribution of $X(T_0)$.

Let us call this diffusion the \emph{exponential layer} or the \emph{reflected exponential layer.}

We record estimates needed for the next step. After erasing the excursions above $s$, and up to $T_0$, $(X_t, Y_t)$ has the Skorokhod representation
\formula*[eq:layer:bm]{
 X_t & = r^{-1} B_t , \\
 Y_t & = s + B_t - K_t ,
}
where $B_t$ is the standard Brownian motion and
\formula{
 K_t & = \max\bigl\{0, \sup\{ B_s : s \in [0, t]\} \bigr\} .
}
For $u(y) = y (2 s - y)$, we have $u(0) = 0$, $u'(s) = 0$ and $\tfrac{1}{2} u''(y) = -1$ for $y \in (0, s)$. Itô's formula, stopped at $\min\{t, T_0\}$, gives
\formula{
 \ex^s \bigl[ \min\{t, T_0\} \bigr] & = s^2 - \ex^s \bigl[u(Y_{\min\{t, T_0\}}) \bigr] \le s^2 .
}
Thus, $T_0 < \infty$ almost surely, and bounded convergence gives
\formula*[eq:layer:bounds]{
 \ex^s \bigl[ T_0 \bigr] & = s^2 = r^4 , \\
 \ex^s \bigl[ \sup \{\lvert X_t \rvert^2 : t \in [0, T_0]\} \bigr] & \le \frac{4}{r^2} \, \ex^s \bigl[ T_0 \bigr] = 4 r^2 ,
}
where the second inequality follows from Doob's inequality for the stopped Brownian motion.

\subsection{Stacking the layers}

Suppose that a probability measure $h$ is a Pólya frequency function, and $p, q, r_n$ are as in~\eqref{eq:pff}. We now stack the Gaussian layer with height $q$ (if $q > 0$) above the finite or infinite sequence of exponential or reflected exponential layers of height $r_n^2$. All these layers are separated by interfaces that only allow $Y_t$ to move downwards. At an interface the process follows the law of the layer below it; that layer reflects at its upper boundary.

For a rigorous definition, we consider the Gaussian layer $(X_t^{(0)}, Y_t^{(0)})$ with height $q$ (or a trivial process if $q = 0$), and a sequence of exponential or reflected exponential layers $(X_t^{(n)}, Y_t^{(n)})$ with heights $r_n^2$. We denote by $\pr^{(p, y_0)}$ the probability under which the above processes are independent, $(X_t^{(0)}, Y_t^{(0)})$ is started at $(p, q)$, and $(X_t^{(n)}, Y_t^{(n)})$ is started at $(0, r_n^2)$. By $T_0^{(n)}$ we denote the boundary hitting time of $(X_t^{(n)}, Y_t^{(n)})$.

Let
\formula{
 y_0 & = q + \sum_{k \ge 1} r_k^2 , &
 y_n & = \sum_{k \ge n} r_k^2 = y_0 - q - r_1^2 - r_2^2 - \ldots - r_{n - 1}^2
}
for $n \ge 1$. We place the Gaussian layer at $\R \times [y_1, y_0]$, and the $n$th exponential layer at $\R \times [y_{n + 1}, y_n]$. More precisely, we define the intransitive diffusion $(X_t, Y_t)$ as follows. Denote
\formula{
 \tau_n & = T_0^{(0)} + T_0^{(1)} + \ldots + T_0^{(n)} .
}
For $t \in [0, \tau_0]$, we set
\formula{
 X_t & = X_t^{(0)}, & Y_t & = y_1 + Y_t^{(0)} .
}
Then, if $(X_t, Y_t)$ has already been defined for $t \in [0, \tau_{n - 1}]$, we let
\formula{
 X(\tau_{n - 1} + t) & = X(\tau_{n - 1}) + X^{(n)}_t , \\
 Y(\tau_{n - 1} + t) & = Y(\tau_{n - 1}) - r_n^2 + Y^{(n)}_t
}
for $t \in (0, T_0^{(n)}]$. Observe that $Y(\tau_n) = y_{n + 1}$ and
\formula{
 X(\tau_n) & = X^{(0)}(T_0^{(0)}) + X^{(1)}(T_0^{(1)}) + \ldots + X^{(n)}(T_0^{(n)}) .
}
Finally, to complete the definition of $(X_t, Y_t)$, we assume that it stays at its final position after $\sup \tau_n$.

If the number of exponential layers is finite, it is straightforward to check that $(X_t, Y_t)$ is a possibly intransitive and irregular shift-invariant diffusion in $\R \times [0, y_0]$, and
\formula{
 X(T_0) & = X^{(0)}(T_0^{(0)}) + \sum_{n \ge 1} X^{(n)}(T_0^{(n)}) .
}
The case of infinite number of layers requires more care.

By~\eqref{eq:layer:bounds},
\formula{
 \ex^{p, y_0} \biggl[ T_0^{(0)} + \sum_{n \ge 1} T_0^{(n)} \biggr] & = q + \sum_{n \ge 1} r_n^4 < \infty ,
}
because $\sum_{n \ge 1} r_n^2 < \infty$. Thus, the partial sums $\tau_n$ of the crossing times $T_0^{(n)}$ converge to a finite limit $\tau_\infty$ almost surely. The independent hitting positions $X^{(n)}(T_0^{(n)})$, $n \ge 1$, have mean $0$ and variances $r_n^2$. Their series therefore converges almost surely and in $L^2$. Consequently, we can define
\formula{
 X_{\tau_\infty + t} & = X^{(0)}(T_0^{(0)}) + \sum_{n \ge 1} X^{(n)}(T_0^{(n)}) , \\
 Y_{\tau_\infty + t} & = 0
}
for $t \ge 0$.

To verify continuity at time $\tau_\infty$, define
\formula{
 M_n & = \sup \{ \lvert X_t^{(n)} \rvert : t \in [0, T_0^{(n)}] \} .
}
By~\eqref{eq:layer:bounds},
\formula{
 \ex^{p, y_0} \biggl[ \sum_{n \ge 1} M_n^2 \biggr] & \le 4 \sum_{n \ge 1} r_n^2 < \infty .
}
Hence, $M_n$ tends to zero almost surely. Together with convergence of $X(\tau_n)$, this proves convergence of the entire horizontal path at the accumulation time. The vertical coordinate lies in intervals whose endpoints decrease to $0$, so its path is continuous there as well. In particular, $T_0 = \tau_\infty$ almost surely.

We use the same construction to define $(X_t, Y_t)$ started at an arbitrary point $(x, y)$: begin in the layer containing that point and concatenate the processes in the remaining lower layers. Each individual layer is strong Markov, and the stopping times $\tau_n$ mark passages through the one-way interfaces between subsequent layers. It is a routine exercise to verify that the concatenated process $(X_t, Y_t)$ also has the strong Markov property. Horizontal translation invariance is immediate from the construction.

Finally,
\formula{
 X(T_0) & = X^{(0)}(T_0^{(0)}) + \sum_{n \ge 1} X^{(n)}(T_0^{(n)}) .
}
Independence and dominated convergence show that the characteristic function of $X(T_0)$ is given by~\eqref{eq:pff}.

Needless to say, if $q = 0$, we omit the Gaussian layer, while if there are no exponential terms in~\eqref{eq:pff}, we do not stack any exponential layer. Finally, we ignore the case when both $q = 0$ and there are no exponential terms, for then $h = \delta_p$, which can be trivially obtained using the uniform downward motion in $\R \times [0, 1]$, started at $(p, 1)$. This proves the following result.

\begin{proposition}
\label{prop:pff}
Every probability measure which is a Pólya frequency function is the hitting distribution of an intransitive irregular shift-invariant diffusion in $\R \times [0, R]$, where $R \in (0, \infty)$, started at the top boundary $\R \times \{R\}$.
\end{proposition}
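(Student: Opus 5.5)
The plan is to realise the factorisation~\eqref{eq:pff} layer by layer, so that the hitting position is a sum of independent contributions. Given a Pólya frequency function $h$ with parameters $p$, $q$, $r_n$, I will use one building block per factor in~\eqref{eq:pff}:
\begin{itemize}[label={}, leftmargin=0pt]
\item for the Gaussian factor, the Gaussian layer of height $q$, in which $X_t$ is a Brownian motion started at $p$ and $Y_t$ moves deterministically downwards at unit speed, so that $T_0=q$ and $X(T_0)\sim N(p,q)$;
\item for each centred exponential factor, the exponential layer (or reflected exponential layer, if $r_n<0$) of height $r_n^2$, obtained from the regular diffusion with $a=0$ and $b=r^{-1}\ind_{[0,s)}$ by erasing the excursions above $s=r^2$; the explicit solution $\ph_\xi$ of~\eqref{eq:ode} shows that its hitting distribution from the top is the centred exponential law with characteristic function $e^{-ir\xi}/(1-ir\xi)$.
\end{itemize}

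I will stack the Gaussian layer on top, on $\R\times[y_1,y_0]$, and place the $n$th exponential layer on $\R\times[y_{n+1},y_n]$, with $y_n=\sum_{k\ge n}r_k^2$. The interfaces are one-way: the process enters the next layer at its top boundary, and that boundary reflects from below. I define the process by concatenating independent copies of the layers at the successive crossing times $\tau_n$, translating each horizontally by the current position $X(\tau_{n-1})$. One-way interfaces guarantee that the horizontal displacements $X^{(n)}(T_0^{(n)})$ are independent. Hence the characteristic function of $X(T_0)$ is the product of the individual factors, which is~\eqref{eq:pff}.

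The main obstacle is the case of infinitely many exponential layers, where I must show that the concatenation is a genuine continuous process reaching $\R\times\{0\}$ in finite time. For this I will use the bounds~\eqref{eq:layer:bounds}.
\begin{itemize}[label={}, leftmargin=0pt]
\item First, $\ex[T_0^{(n)}]=r_n^4$ and $\sum r_n^4 \le (\sum r_n^2)^2<\infty$. Hence $\tau_n\to\tau_\infty<\infty$ almost surely.
\item Second, the displacements are independent, centred and have variances $r_n^2$. Their series therefore converges almost surely and in $L^2$.
\item Third, by Doob's inequality, $\ex\sum M_n^2\le 4\sum r_n^2<\infty$, where $M_n$ is the maximal horizontal excursion inside layer $n$. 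Thus $M_n\to0$ almost surely.
\end{itemize}
Together these give continuity of $X$ at $\tau_\infty$. Continuity of $Y$ there follows because $y_n\downarrow0$. Consequently $T_0=\tau_\infty$, and I can freeze the process afterwards.

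Finally, I will check the remaining properties and edge cases.
\begin{itemize}[label={}, leftmargin=0pt]
\item The strong Markov property follows from that of each layer and the fact that the $\tau_n$ are stopping times marking passages through the interfaces. Starting from an arbitrary point, the process begins in the corresponding layer and runs through the remaining lower layers.
\item Shift invariance is immediate from the construction.
\item The characteristic function of the limit is identified through independence and dominated convergence of the partial products.
\item If $q=0$, I omit the Gaussian layer. If $h=\delta_p$, I use uniform downward motion on $\R\times[0,1]$ started at $(p,1)$.
\item The total height $R=y_0=q+\sum r_n^2$ is finite, and the process is started at $\R\times\{R\}$.
\end{itemize}
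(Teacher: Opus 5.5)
Your proposal is correct and follows the paper's proof essentially step by step. It uses the same Gaussian layer stacked above the (reflected) exponential layers of heights $r_n^2$, with one-way downward interfaces, concatenation of independent copies at the crossing times $\tau_n$, and the same handling of infinitely many layers via $\ex[T_0^{(n)}] = r_n^4$, $L^2$ convergence of the centred displacements, Doob's bound $\ex \sum M_n^2 \le 4 \sum r_n^2$ for continuity at $\tau_\infty$, and the same edge cases $q = 0$ and $h = \delta_p$.
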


The diffusion constructed above will be called the \emph{Pólya frequency function layer.}

%
%

\section{Proof of the main result}

With all the tools developed so far, the proof is almost immediate.

\begin{proof}[Proof of Theorem~\ref{thm:main}]
By Corollary~1.9 in~\cite{ks22}, $f$ is the convolution of an $\amcm$ measure $g$ and an integrable Pólya frequency function $h$. Since $f$ has mass at most $1$, both factors have a finite positive mass, and we may normalise both of them so that $h(\R) = 1$ and $g(\R) \le 1$.

By Proposition~\ref{prop:amcm} $g$ is the hitting distribution of a regular transitive shift-invariant diffusion in $\R \times [0, R_f)$, started at $(0, 1)$; here $R_f \in (0, \infty]$. Proposition~\ref{prop:pff} states that $h$ is the hitting distribution of an intransitive shift-invariant diffusion in $\R \times [0, R_g]$, started at $(x, R_g)$; with $R_g \in (0, \infty)$ and $x \in \R$.

We now stack the $\amcm$ layer corresponding to $g$ atop the Pólya frequency function layer corresponding to $h$, with a one-way interface that $Y_t$ can only cross downwards. As in the proof of Proposition~\ref{prop:pff}, we obtain an intransitive shift-invariant diffusion in $\R \times [0, R_g + R_f)$, whose hitting distribution when started at $(x, R_g + 1)$ is the convolution $g * h = f$.

Finally, if $R_f$ is finite, we perform a nonlinear change of variables from $[0, R_g + R_f)$ to $[0, \infty)$ to obtain an intransitive shift-invariant diffusion in $\R \times [0, \infty)$ with the same hitting distribution $f$.
\end{proof}

%
%

\section*{}

\subsection*{Acknowledgements}

I thank Dariusz Kosz for a stimulating discussion after a seminar talk on the results of~\cite{k26}. It was his question that led to the construction described in this note.

I also thank Jacek Wszoła for sharing with me his preliminary work on the discrete analogue and for inspiring discussions.

The large language model \emph{GPT-6 Astra} (OpenAI) was used to assist with reviewing the mathematical arguments and typesetting the document.

This research was funded in whole or in part by National Science Centre, Poland, grant number 2023/49/B/ST1/04303. For the purpose of Open Access, the author has applied a CC-BY public copyright licence to any Author Accepted Manuscript (AAM) version arising from this submission.

%
%

%
%


\begin{thebibliography}{00}

\bibitem{k19}
\textsc{Mateusz Kwaśnicki,}
\emph{Fluctuation theory for Lévy processes with completely monotone jumps.}
Electron. J.~Probab. 24 (2019), no.~40: 1--40.
\doi{10.1214/19-EJP300}

\bibitem{k20}
\textsc{Mateusz Kwaśnicki,}
\emph{A new class of bell-shaped functions.}
Trans. Amer. Math. Soc. 373(4) (2020): 2255--2280.
\doi{10.1090/tran/7825}

\bibitem{k22}
\textsc{Mateusz Kwaśnicki,}
\emph{Harmonic extension technique for non-symmetric operators with completely monotone kernels.}
Calc. Var. Partial. Differ. Equ. 61 (2022), no. 202: 1--40.
\doi{10.1007/s00526-022-02308-2}

\bibitem{k23}
\textsc{Mateusz Kwaśnicki,}
\emph{Boundary traces of shift-invariant diffusions in half-plane.}
Ann. Inst. Henri Poincaré Probab. Statist. 59(1) (2023): 411--436.
\doi{10.1214/22-AIHP1250}

\bibitem{k26}
\textsc{Mateusz Kwaśnicki,}
\textit{Poisson kernels on the half-plane are bell-shaped.}
Bull. London Math. Soc. 58 (2026), no.~e70303: 1--21.
\doi{10.1112/blms.70303}

\bibitem{ks22}
\textsc{Mateusz Kwaśnicki, Thomas Simon,}
\emph{Characterisation of the class of bell-shaped functions.}
Math. Zeitschrift 301(3) (2022): 2659--2683.
\doi{10.1007/s00209-022-02997-7}

\end{thebibliography}
\end{document}